\newtheorem{theorem}{Theorem}[section]
\newtheorem{definition}{Definition}[section]
\newtheorem{rem}{Remark}[section]
\newtheorem{cor}{Corollary}[section]
\newtheorem{conj}{Conjecture}[section]
\numberwithin{table}{section}
\begin{document}

\title{The Prevalence of Persistent Tangles.}

\author{Louis H. Kauffman\\
Department of Mathematics, Statistics and Computer Science \\ 851 South Morgan Street   \\ University of Illinois at Chicago\\
Chicago, Illinois 60607-7045 USA\\ and\\ Department of Mechanics and Mathematics\\ Novosibirsk State University\\Novosibirsk, Russia \\
\texttt{kauffman@uic.edu}\\
and\\
Pedro Lopes\\
Center for Mathematical Analysis, Geometry, and Dynamical Systems, \\
Department of Mathematics, \\
Instituto Superior T\'{e}cnico, Universidade de Lisboa\\
1049-001 Lisbon, Portugal \\
\texttt{pelopes@math.tecnico.ulisboa.pt}}
\maketitle

\begin{abstract}
This article addresses persistent tangles. These are tangles whose presence in a knot diagram forces that diagram to be knotted. We provide  new methods for constructing persistent tangles. Our techniques rely mainly on the existence of non-trivial colorings for the tangles in question.  Our main result in this article is that any knot admitting a non-trivial coloring gives rise to persistent tangles. Furthermore, we discuss when these persistent tangles are  non-trivial.
\end{abstract}

Keywords: knots, tangles, persistent tangles, colorings, irreducible tangles

Mathematics Subject Classification 2010: 57M25

\section{Introduction}\label{sec:intro}

This article addresses the notion of  \emph{persistent tangle}, by which we mean, a tangle whose appearance in a knot diagram forces that diagram to be knotted. We show that persistent tangles are prevalent, as subtangles, in diagrams  of non-trivially colored knots.
This article also addresses the following issue: local features that provide global information. For instance, we have in mind the identification of entanglement in long polymers or DNA. The size of these long molecules complicates the identification of entanglement (global information). Therefore, the recognition of persistent tangles (local feature) should be relevant in this context. The techniques in the proofs are mainly elaborations of the following idea: we endow our tangles with specific non-trivial colorings  that assign the same color to the start- and end-points of the tangle, over an appropriate modulus, see \cite{SilverWilliams}. This coloring can be extended (monochromatically) to the rest of the knot diagram it may belong to. Thus, that diagram is non-trivially colored, and therefore knotted. We often use Fox colorings, but not exclusively. In a Fox coloring the colors are in $\mathbf{Z}/N\mathbf{Z}$ for an appropriate positive integer $N$ (or simply in $\mathbf{Z}$) and the sum of the colors assigned to the undercrossing arcs at a crossing is twice the color assigned to the over crossing arc \cite{lhKauffman}. The term \emph{knot} in this article  means a $1$-component link.

Along with giving new constructions for persistent tangles, we also formulate a conjecture that ``irreducible tangles are persistent'' (see Section \ref{sec:results} for the definition of irreducibility and the precise statement of this conjecture). Solution to this conjecture appears to require techniques beyond the reach of the present paper.

\subsection{Acknowledgements.}\label{subsec:ack}

Kauffman's work was supported by the Laboratory of Topology and Dynamics, Novosibirsk State University (contract no. 14.Y26.31.0025 with the Ministry of Education and Science of the Russian Federation).

Lopes acknowledges support from FCT (Funda\c c\~ao para a Ci\^encia e a Tecnologia), Portugal, through project FCT PTDC/MAT-PUR/31089/2017, ``Higher Structures and Applications''.

\section{First  results}\label{sec:1st-results}

We now recall the basic definitions and identify the original  persistent tangle and the trivial ones.

\begin{definition}
A coloring of a knot $K$ by a quandle $X$ is a homomorphism from the fundamental quandle of the knot $K$ to the quandle $X$ \cite{SMatveev, DJoyce}. A non-trivial coloring is one such homomorphism whose range is non-singular.
\end{definition}

\begin{rem}\label{thm:non-trivialcols.}
The unknot does not admit non-trivial colorings. Note however that unlinks can sometimes admit non-trivial colorings.
\end{rem}
\begin{proof}
The standard diagram of the unknot is a circle on the plane without self-intersections. Therefore, the colorings it admits only involves one color.
\end{proof}
\begin{rem}
We will use Remark \ref{thm:non-trivialcols.} in the following form. If a knot admits non-trivial colorings then it is non-trivial.
\end{rem}

\begin{definition}\label{def:tangle}
A tangle  is an embedding of one $($respect., two$)$ arc$($s$)$ in a ball with the fixed end points on the surface of the ball. Two tangles are equivalent if they are related by an ambient isotopy, keeping the endpoints fixed. The diagrammatical counterpart is a piece of knot diagram on a disc, with the endpoints on the boundary of the disc. The Reidemeister moves are restricted to the disc with the endpoints fixed on the boundary of the disc. See Figures \ref{fig:krebes}, \ref{fig:noname}, \ref{fig:persist-1-arc}, and \ref{fig:persist-1-arc-bis}, for illustrating examples. In a subsequent article we will look into the $n$-tangle case with $n>2$.
\end{definition}

\begin{definition}\label{def:persistent-tangle}
A persistent tangle is a  tangle whose presence in a knot diagram implies this knot is non-trivial. Figure \ref{fig:noname} provides an example of a persistent tangle.
\end{definition}

The original  persistent tangle is due to Krebes \cite{Krebes} and is depicted in Figure \ref{fig:krebes}, see also Figure \ref{fig:noname}. Krebes proved   persistence of tangles like those depicted in Figures \ref{fig:krebes} and \ref{fig:noname} by way of the bracket polynomial. Later Silver and Williams proved the same sort of result by way of Fox colorings \cite{SilverWilliams}. Our approach here is in the spirit of \cite{SilverWilliams} but we consider other colorings besides the Fox colorings. We acknowledge also the work of other authors in related matters \cite{SMAbernathy, KrebesSilverWilliams, KauffmanGoldman, PrzytyckiSilverWilliams, Ruberman, SilverWilliams2}. In \cite{KauffmanGoldman} invariants of knots and tangles are formulated via sums over weighted trees in the same way as the more recent paper by Silver and Williams, \cite{SilverWilliams2}, and it is shown how the tangle fraction and some generalizations arise by using the checkerboard graph for knots and links. Furthermore, \cite{KauffmanGoldman} interprets this combinatorics in terms of
the current flow in electrical circuits. It is possible that there is more work to be done about persistent tangles in this domain.

\begin{figure}[!ht]
	\psfrag{1}{\huge$1$}
	\psfrag{2}{\huge$2$}
	\psfrag{3}{\huge$3$}
	\psfrag{...}{\huge$\cdots$}
	\psfrag{p xings}{\huge$\text{$p$-xings}$}
	\psfrag{4}{\huge$4$}
	\psfrag{p-2}{\huge$p-2$}
	\psfrag{p-1}{\huge$p-1$}
	\psfrag{0}{\huge$0$}
	\centerline{\scalebox{.4}{\includegraphics{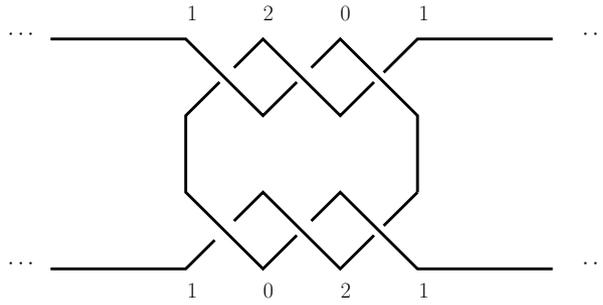}}}
	\caption{The original persistent tangle, by Krebes. The non-trivial coloring at issue uses the dihedral quandle of order $3$.}\label{fig:krebes}
\end{figure}

Note that the Krebes example is not a rational tangle. In fact, no persistent tangle can be rational \cite{HenrichKauffman} since any rational tangle can be inserted into an unknot \cite{LambropoulouKauffman}. The reader may enjoy proving this as an exercise.

\begin{figure}[!ht]
	\psfrag{1}{\huge$1$}
	\psfrag{2}{\huge$2$}
	\psfrag{3}{\huge$3$}
	\psfrag{...}{\huge$\cdots$}
	\psfrag{p xings}{\huge$\text{$p$-xings}$}
	\psfrag{4}{\huge$4$}
	\psfrag{p-2}{\huge$p-2$}
	\psfrag{p-1}{\huge$p-1$}
	\psfrag{0}{\huge$0$}
	\centerline{\scalebox{.4}{\includegraphics{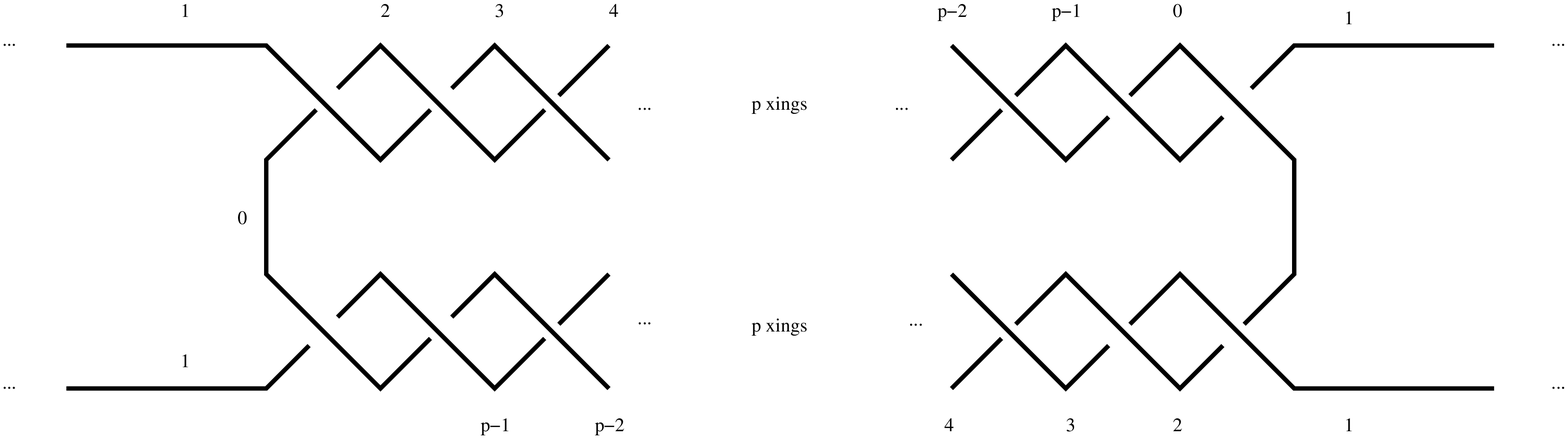}}}
	\caption{A persistent tangle. The non-trivial coloring at issue uses the dihedral quandle of order $p$, for an odd prime $p$.}\label{fig:noname}
\end{figure}

We now elaborate on a number of constructions that obviously give rise to persistent tangles. We call these trivial persistent tangles. We start with the case of a $1$-tangle.
We recall that  \emph{genus of a knot}  is the least genus of the oriented surfaces whose boundary is the knot at issue.

\begin{theorem}\label{thm:genus}
Genus is additive under connected sums of knots. A knot is trivial if and only if its genus is $0$.
\end{theorem}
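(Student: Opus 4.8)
The plan is to treat the two assertions separately, starting with the quicker facts. First I would recall that every knot $K$ bounds some orientable surface---for instance one produced by Seifert's algorithm from any diagram---so that the genus $g(K)$ is a well-defined non-negative integer. For the equivalence ``trivial if and only if genus $0$'': if $K$ is the unknot it bounds an embedded disk, a connected orientable surface of genus $0$, so $g(K)=0$; conversely, a connected orientable Seifert surface with a single boundary component and genus $0$ is a disk, and a knot bounding an embedded disk is isotopic to the boundary of a standard disk and hence trivial. For additivity I would establish the two inequalities in turn. The inequality $g(K_1 \# K_2) \le g(K_1)+g(K_2)$ is the easy half: given minimal-genus Seifert surfaces $F_1$ and $F_2$ for $K_1$ and $K_2$, a boundary connected sum of $F_1$ and $F_2$ performed by a band near the connect-sum region is a Seifert surface for $K_1 \# K_2$ of genus $g(F_1)+g(F_2)$.

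The substantial half is $g(K_1 \# K_2) \ge g(K_1)+g(K_2)$. Here I would fix a minimal-genus Seifert surface $F$ for $K_1 \# K_2$ together with a decomposing sphere $S$ meeting the knot transversally in two points, and put $F$ and $S$ in general position, so that $F \cap S$ is a finite disjoint union of simple closed curves together with one arc $\alpha$ joining the two points of $S \cap K$. The aim is to remove all the closed curves. Taking an innermost such curve $c$ on $S$, it bounds a disk $D \subset S$ whose interior is disjoint from $F$; surgering $F$ along $D$ yields a surface with the same boundary and strictly fewer intersection circles. Minimality of $g(F)$ prevents this surgery from lowering the genus, which---after discarding any closed spherical component that might split off---forces every intersection circle to be inessential and lets me iterate down to $F \cap S = \alpha$. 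Cutting $F$ along $\alpha$ then separates it into Seifert surfaces $F_1$ and $F_2$ for $K_1$ and $K_2$, and a short Euler-characteristic count gives $g(F)=g(F_1)+g(F_2) \ge g(K_1)+g(K_2)$, completing the argument.

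The step I expect to be the main obstacle is the innermost-circle surgery. One has to verify that the surgered surface is still an orientable Seifert surface, that its genus does not exceed $g(F)$, and that repeating the procedure genuinely terminates with no closed intersection curves left. The delicate point is controlling whether the surgery disconnects $F$: a closed component that splits off must be recognized as a sphere and discarded, and it is precisely the minimality of $g(F)$ that rules out an honest decrease in genus and thereby pins down the final configuration $F \cap S = \alpha$.
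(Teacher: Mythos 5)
The paper does not actually prove this statement at all: its ``proof'' is a one-line citation of Lickorish's textbook, since genus additivity and the genus-$0$ characterization of the unknot are classical results that the authors treat as background. Your proposal, by contrast, reconstructs the mathematics behind that citation, and it is the standard argument --- indeed essentially the one found in the cited reference: the easy inequality $g(K_1 \# K_2) \le g(K_1)+g(K_2)$ by boundary connected sum of minimal Seifert surfaces, and the hard inequality by intersecting a minimal-genus Seifert surface $F$ for $K_1 \# K_2$ with the decomposing sphere $S$, eliminating intersection circles by innermost-disk surgery (minimality forcing any split-off closed component to be a sphere), and then cutting along the remaining arc $\alpha$ with an Euler-characteristic count. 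Your outline is correct, and you rightly flag the surgery step as the delicate one. Two small points you gloss over, both standard: the ``innermost'' circle must be chosen innermost on the side of $S$ \emph{not} containing the arc $\alpha$, so that its disk misses both the other circles and $\alpha$; and after reducing to $F \cap S = \alpha$, one should note that each piece $F_i = F \cap B_i$ is connected (a component of $F_i$ missing $\alpha$ would contradict connectedness of $F$, since the two pieces are glued only along $\alpha$) and that $\partial F_i$ is the knot $K_i$, because any two arcs on $S$ joining the two points of $S \cap K$ are isotopic in $S$. With those details supplied, your argument is a complete proof of what the paper merely quotes; the trade-off is that the paper's deferral is appropriate for results that are not its contribution, while your version makes the theorem self-contained.
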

\begin{proof}
These are known results. See \cite{Lickorish} for a proof.
\end{proof}
\begin{cor}\label{cor:genus}
A non-trivial knot gives rise to a persistent $1$-tangle by disconnecting any of its arcs in one of its diagrams. This is our first instance of a trivial persistent tangle.
\end{cor}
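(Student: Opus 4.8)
The plan is to reduce the persistence of this $1$-tangle to the additivity of genus supplied by Theorem~\ref{thm:genus}. Write $K$ for the given non-trivial knot, and let $T$ be the $1$-tangle obtained by cutting one arc of a chosen diagram of $K$. Thus $T$ sits inside a disc with its two endpoints $p,q$ on the boundary, and reconnecting $p$ to $q$ by a trivial arc recovers $K$.

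First I would unpack what it means for $T$ to appear in a knot diagram $D$. By Definition~\ref{def:persistent-tangle} and Definition~\ref{def:tangle}, such a $D$ contains a copy of $T$ inside some disc, so $D$ is the union of $T$ with the remaining portion of the diagram lying outside that disc. Because $D$ is a knot, that is, a single closed component, this remaining portion must be a single arc joining the endpoints $p$ and $q$; in other words it is itself a $1$-tangle $T'$, and reconnecting its endpoints by a trivial arc produces some knot $K'$ (possibly the unknot).

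The key observation is that gluing a $1$-tangle coming from $K$ to a $1$-tangle coming from $K'$ along their common pair of endpoints is precisely the connected-sum construction, so $D = K \mathbin{\#} K'$. I would then invoke Theorem~\ref{thm:genus}: since genus is additive under connected sums, the genus of $D$ equals the genus of $K$ plus the genus of $K'$; and since $K$ is non-trivial its genus is strictly positive, forcing the genus of $D$ to be strictly positive as well. By the same theorem $D$ is therefore non-trivial. As $D$ was an arbitrary knot diagram containing $T$, the tangle $T$ is persistent.

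The main point to handle with care is the identification of $D$ with the connected sum $K \mathbin{\#} K'$: one must check that cutting a single arc of $K$ and then re-closing it inside an ambient knot diagram genuinely realises the connected-sum operation, rather than some other gluing, and in particular that the external part is forced to be a single arc by the hypothesis that $D$ has one component. Once this is verified the conclusion is immediate, which is exactly why we regard $T$ as a \emph{trivial} persistent tangle.
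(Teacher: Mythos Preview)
Your proposal is correct and follows essentially the same route as the paper: you identify the insertion of the $1$-tangle into an ambient knot diagram as a connected sum $K\mathbin{\#}K'$ and then invoke Theorem~\ref{thm:genus} on additivity and positivity of genus. The paper's proof is more terse, simply observing that attaching a second $1$-tangle to the given one realises a connected sum and then appealing to Theorem~\ref{thm:genus}, but the underlying argument is identical; your version just makes the reduction to connected sum and the genus inequality explicit.
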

\begin{proof}
Attaching to our $1$-tangle a second $1$-tangle amounts to performing a connected sum of a non-trivial knot with another knot. Applying Theorem \ref{thm:genus} concludes the proof.  See Figure \ref{fig:persist-1-tangle}, disregarding colorings.
\end{proof}

\begin{cor}\label{cor:persistent-colored-1-tangles}
Every knot admitting a non-trivial coloring yields a persistent $1$-tangle via disconnecting any one of its arcs.
\end{cor}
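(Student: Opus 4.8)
The plan is to prove persistence directly by the coloring-extension mechanism advertised in the introduction, rather than merely quoting the genus argument of Corollary~\ref{cor:genus}. Let $K$ be a knot carrying a non-trivial coloring $\mathcal{C}$ by a quandle $X$, fix a diagram of $K$, and form the $1$-tangle $T$ by disconnecting one of its arcs at an interior point, chosen away from any crossing so that no crossing is destroyed. The first fact I would record is that the two resulting endpoints of $T$ lie on what was a single arc of the diagram, hence $\mathcal{C}$ assigns them one and the same color $c$. Moreover, since the cut removes no crossing, the restriction $\mathcal{C}|_{T}$ is still a coloring that uses every color occurring in $\mathcal{C}$; in particular it remains non-trivial, involving at least two colors.

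Next I would set up the persistence step. Suppose $T$ appears inside some knot diagram $D'$, producing a knot $K'$. The complement of $T$ inside $D'$ is a $1$-tangle $S$ whose two endpoints are glued to the two endpoints of $T$. I would color all of $S$ with the single color $c$. Because a constant assignment is always a legitimate coloring of any diagram (this is exactly quandle idempotency $x \triangleright x = x$, and for Fox colorings the identity $c+c=2c$ at each crossing), the monochromatic assignment on $S$ satisfies the coloring condition at every crossing of $S$. At the two gluing points the arc of $T$ carries color $c$, by the first paragraph, and meets the constant color $c$ on $S$, so the two partial colorings agree there. Hence $\mathcal{C}|_{T}$ together with the constant $c$ on $S$ defines a genuine coloring of the whole knot $K'$.

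Finally I would verify non-triviality and conclude. The coloring just built on $K'$ agrees with $\mathcal{C}|_{T}$ on $T$, and the latter already uses at least two colors, so the coloring of $K'$ is non-trivial. By Remark~\ref{thm:non-trivialcols.} in its stated form --- a knot admitting a non-trivial coloring is non-trivial --- the knot $K'$ is non-trivial. Since $D'$ was an arbitrary knot diagram containing $T$, the tangle $T$ is persistent.

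The steps are individually routine; the one point that genuinely has to be checked, and which I expect to be the main (if modest) obstacle, is the compatibility at the gluing locus: one must be sure that disconnecting a single arc really produces two endpoints of equal color, and that the monochromatic extension does not clash with $\mathcal{C}|_{T}$ at those two junctions. Once this ``same color at both feet of the tangle'' property is secured, the rest is forced. I would also remark that the statement follows immediately from Corollary~\ref{cor:genus} combined with Remark~\ref{thm:non-trivialcols.}, but the coloring proof is preferable here because it is the prototype for the constructions in the later sections.
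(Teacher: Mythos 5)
Your proof is correct, but it is not the route the paper takes for this corollary. The paper's own proof is two lines long: a non-trivial coloring makes the knot $K$ non-trivial (Remark \ref{thm:non-trivialcols.}), and then Corollary \ref{cor:genus} finishes the job; that corollary in turn rests on additivity of genus under connected sum (Theorem \ref{thm:genus}, cited from Lickorish), since inserting the $1$-tangle into any other diagram amounts to forming a connected sum with a non-trivial summand. You instead prove persistence directly by the coloring-extension mechanism: the two feet of the $1$-tangle inherit one common color $c$ because they arise from cutting a single arc, the complementary $1$-tangle is colored constantly $c$ (legitimate by quandle idempotency), the two partial colorings agree at the two junction arcs, and the resulting coloring of the ambient knot is non-trivial, hence that knot is non-trivial. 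This is precisely the argument the paper deploys for the $2$-tangle analogue, Corollary \ref{thm:persistent-2-tangles} --- which the paper explicitly calls ``a different view of the fact expressed in'' the present corollary --- and again in Theorem \ref{thm:1st-theorem}, and you have correctly identified the one point needing care (equal colors at both feet, so the monochromatic extension is compatible). The trade-off between the two approaches: the paper's route is shorter given the machinery already in place, and Corollary \ref{cor:genus} itself applies to \emph{every} non-trivial knot, colorable or not, whereas your argument only reaches colorable knots (which is all the corollary asserts); on the other hand, your route is self-contained in elementary coloring theory, avoids invoking the genus theorem, and serves as the prototype for the constructions in Section \ref{sec:results}, which is exactly the role the paper assigns to Corollary \ref{thm:persistent-2-tangles}.
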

\begin{proof}
If the knot admits a non-trivial coloring then it is non-trivial. We can now apply Corollary \ref{cor:genus} to conclude the proof. See Figure \ref{fig:persist-1-tangle}, again.
\end{proof}
\begin{figure}[!ht]
	\psfrag{1}{\huge$1$}
	\psfrag{2}{\huge$2$}
	\psfrag{3}{\huge$3$}
	\psfrag{...}{\huge$\cdots$}
	\psfrag{p xings}{\huge$\text{$p$-xings}$}
	\psfrag{4}{\huge$4$}
	\psfrag{p-2}{\huge$p-2$}
	\psfrag{p-1}{\huge$p-1$}
	\psfrag{0}{\huge$0$}
	\centerline{\scalebox{.4}{\includegraphics{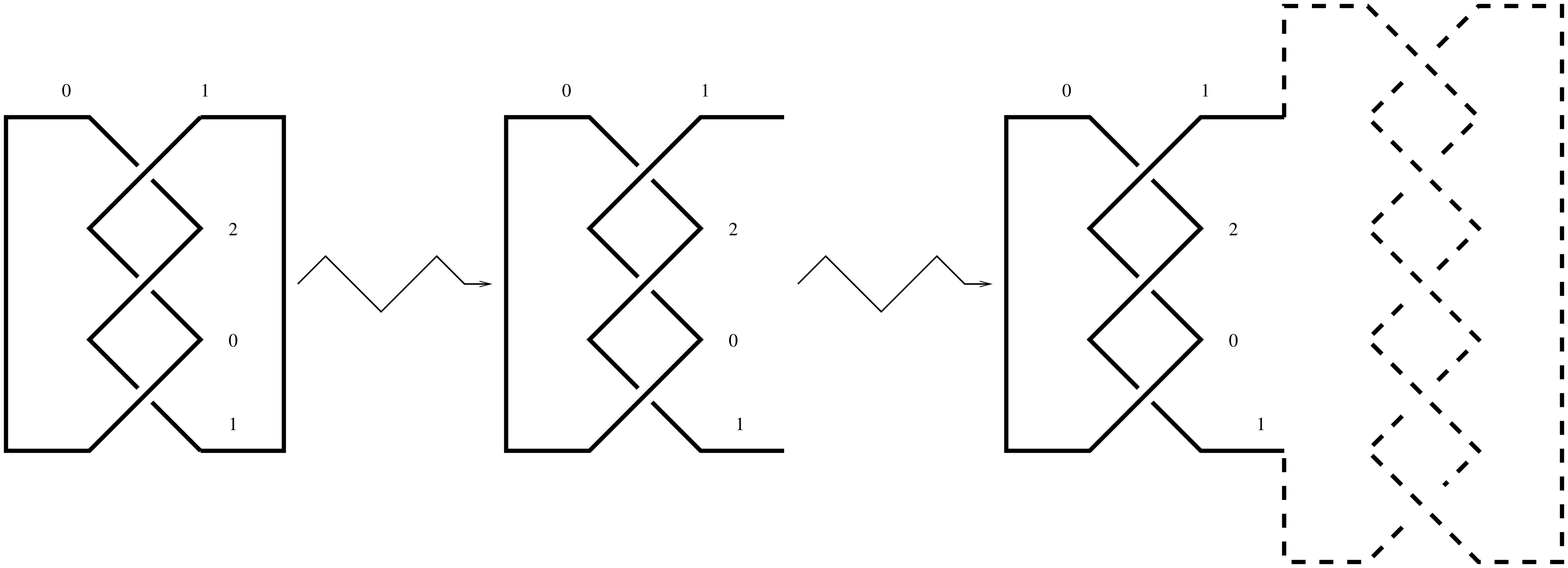}}}
	\caption{Left hand-side: trefoil with a non-trivial coloring - the non-trivial knot. Middle: The non-trivial knot converted into a $1$-tangle by disconnecting  one arc. Right hand-side: Identification of the $1$-tangle in a new knot implies new knot is non-trivial.}\label{fig:persist-1-tangle}
\end{figure}

Corollary \ref{thm:persistent-2-tangles} is a different view of the fact expressed in Corollary \ref{cor:persistent-colored-1-tangles} yet paving the way for the subsequent material.

\begin{cor}\label{thm:persistent-2-tangles}
Assume $K$ is a knot admitting non-trivial colorings. Then $K$ gives rise to a persistent $2$-tangle by cutting an arc at two distinct points.
\end{cor}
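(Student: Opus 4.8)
The plan is to imitate the monochromatic-extension argument underlying Corollary \ref{cor:persistent-colored-1-tangles}, now keeping track of four endpoints instead of two. Fix a non-trivial coloring $\mathcal{C}$ of $K$ by a quandle $X$ (for concreteness one may take a Fox coloring modulo $N$). First I would choose a single arc of a diagram of $K$ --- that is, a portion of the diagram on which $\mathcal{C}$ is constant, say equal to $c$ --- and pick two distinct interior points $a$ and $b$ on it. Cutting $K$ at $a$ and at $b$ severs the knot circle into two arcs, hence yields a $2$-tangle $T$ sitting in a disk whose boundary carries the four endpoints $a^{-}, a^{+}, b^{-}, b^{+}$. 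Because $a$ and $b$ lie in the interior of one and the same arc, I would observe that all four endpoints inherit the color $c$ from $\mathcal{C}$, and that the restriction of $\mathcal{C}$ to $T$ is a legitimate coloring of $T$: cutting introduces no new crossing, so every coloring relation of $K$ survives inside $T$.

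Next I would argue persistence directly. Let $D'$ be any knot diagram in which $T$ appears as a subtangle, and let $T'$ denote the complementary tangle, i.e. the part of $D'$ lying outside the disk, which joins the four boundary points in pairs by two arcs. I would color all of $T'$ monochromatically by $c$. This is consistent: at every crossing of $T'$ the three local arcs all receive $c$, which satisfies the quandle relation (in particular the Fox relation $c+c=2c$), and along the boundary the four endpoints of $T'$ also receive $c$, matching those of $T$. Gluing the two colorings produces a single coloring $\widehat{\mathcal{C}}$ of the whole diagram $D'$.

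Finally I would note that $\widehat{\mathcal{C}}$ is non-trivial: on the sub-arc of $T$ coming from $K$ it reproduces $\mathcal{C}$, whose range is already non-singular, so $\widehat{\mathcal{C}}$ uses at least two colors. Hence $D'$ admits a non-trivial coloring, and by Remark \ref{thm:non-trivialcols.}, in the form that a non-trivially colored knot is non-trivial, the knot underlying $D'$ is non-trivial. Since $D'$ was an arbitrary knot diagram containing $T$, the $2$-tangle $T$ is persistent; this also recovers Corollary \ref{cor:persistent-colored-1-tangles} from a $2$-tangle viewpoint, as advertised.

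The step that most needs care --- though it is bookkeeping rather than genuine difficulty --- is the first one: verifying that all four endpoints receive the \emph{same} color. This is precisely why the two cut points must be taken on a \emph{single} arc, where $\mathcal{C}$ is locally constant; cutting at two points on arcs bearing distinct colors would break the monochromatic extension, since a single external arc cannot interpolate between two different colors. Granting the common endpoint color, the consistency of the monochromatic extension and the non-triviality of the glued coloring are immediate, so I expect no substantive obstacle to remain.
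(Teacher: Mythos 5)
Your proof is correct and takes essentially the same route as the paper's: cut a single arc at two points so that all four tangle endpoints share one color, extend monochromatically over the complementary tangle of any ambient knot diagram, and conclude non-triviality of that diagram from the non-trivial restriction to the tangle. The additional bookkeeping you supply (consistency at exterior crossings, non-singularity of the glued coloring's range) merely makes explicit what the paper's proof leaves implicit.
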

\begin{proof}
Since $K$ admits a non-trivial coloring, there exists a diagram $D$ of $K$ which supports such a coloring. We disconnect one arc at two distinct points thereby producing a tangle with two start-points and two end-points, all of them receiving the same color, say $a$. Clearly, if this tangle is found in another knot diagram, this knot diagram is non-trivially colored. The arcs of the new diagram which do not belong to the tangle are monochromatically colored with $a$; the tangle part of the new diagram is colored as in the original knot diagram.   Figure \ref{fig:persist-1-arc} provides an illustration of this process.
\begin{figure}[!ht]
	\psfrag{0}{\huge$0$}
	\psfrag{1}{\huge$1$}
	\psfrag{2}{\huge$2$}
	\centerline{\scalebox{.4}{\includegraphics{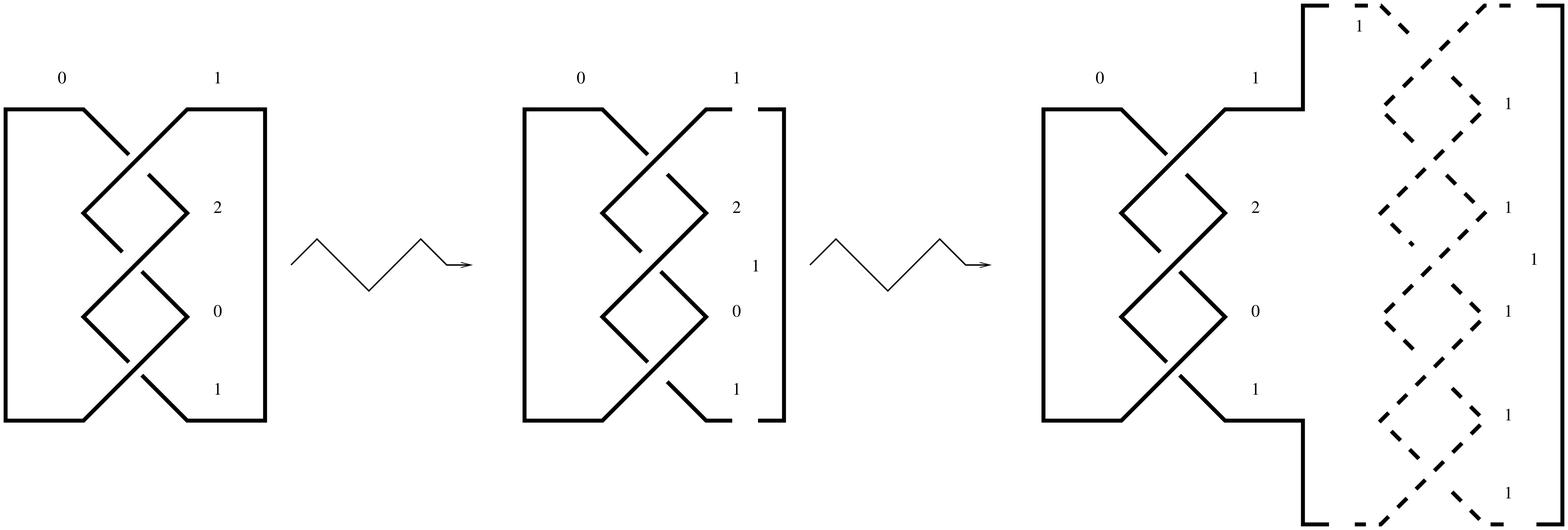}}}
	\caption{Left-hand side: a non-trivial coloring (Fox tricoloring) on a knot (trefoil) diagram. Middle: disconnecting one of the arcs at two points, producing a persistent tangle. Right-hand side: The persistent tangle inside a new non-trivial knot. This example produces a connected sum of knots. Note the difference from Figure \ref{fig:persist-1-tangle}}\label{fig:persist-1-arc}
\end{figure}
\end{proof}

The current article is an extension of these results, especially Corollary \ref{thm:persistent-2-tangles}, but we will disconnect two distinct arcs of (certain) knot diagrams instead of one, in order to produce persistent $2$-tangles.

\section{Results}\label{sec:results}

The next result advertises the possibility for the persistent tangles to be found inside knot diagrams that are not connected sums. In this way, we hope to enlarge the variety of persistent tangles. In particular we hope to obtain persistent tangles which give rise to knots which are not connected sums.

\begin{theorem}\label{thm:1st-theorem}
If a knot $K$ admits a non-trivial coloring over a diagram $D$ with distinct arcs bearing the same color, then it gives rise to a persistent tangle by cutting at one point each of the two arcs that have the same color.
\end{theorem}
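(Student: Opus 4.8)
The plan is to mimic the proof of Corollary \ref{thm:persistent-2-tangles}, replacing ``cut one arc at two points'' by ``cut two distinct arcs at one point each,'' and to transport the non-triviality of the coloring across the cut. Fix a diagram $D$ of $K$ supporting a non-trivial coloring, and let $A$ and $B$ be two distinct arcs both receiving the color $a$. Since $K$ is a single closed curve, cutting it at an interior (non-crossing) point $p$ of $A$ and an interior point $q$ of $B$ severs it into exactly two strands, hence produces a $2$-tangle $T$ with four endpoints. The first thing I would record is that all four endpoints carry the color $a$: the two stubs created at $p$ are pieces of the monochromatic arc $A$, and the two stubs created at $q$ are pieces of the monochromatic arc $B$. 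This is precisely the feature the hypothesis ``distinct arcs bearing the same color'' is designed to guarantee, and it is what lets the two-arc cut play the role of the single-arc cut of Corollary \ref{thm:persistent-2-tangles}.

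Next I would verify that $T$ inherits a non-trivial coloring. Cutting at interior points of $A$ and $B$ destroys no crossing of $D$ and recolors nothing, so the coloring of $D$ restricts to a valid coloring of the tangle diagram $T$. Moreover, since the coloring of $D$ is non-trivial it assigns some color distinct from $a$ to at least one arc, and that arc is untouched by the two cuts, because we only cut arcs colored $a$. Hence the coloring of $T$ uses at least two colors.

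The persistence argument is then the standard monochromatic extension. Suppose $T$ occurs as a subtangle of a knot diagram $D'$, sitting inside a disk $\Delta$, and let $T'=D'\setminus\mathrm{int}(\Delta)$ be the complementary tangle. I color $\Delta$ by the inherited coloring of $T$ and color every arc of $T'$ with the single color $a$. A monochromatic assignment is automatically a valid coloring, the two assignments agree at the four boundary endpoints (all equal to $a$), and no crossing lies on $\partial\Delta$, so the result is a valid coloring of $D'$. It uses at least two colors because $T$ does, so $D'$ is non-trivially colored and therefore represents a non-trivial knot, a non-trivially colored knot being non-trivial by Remark \ref{thm:non-trivialcols.}. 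This shows $T$ is persistent.

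The step I expect to be the real obstacle is the geometric realization of ``$D$ cut at $p$ and $q$'' as a bona fide $2$-tangle, i.e. exhibiting a disk whose boundary contains all four stubs and meets the severed diagram in nothing else. Dragging a buried endpoint out to the boundary forces new crossings, and a new crossing against any arc colored differently from $a$ would destroy either the validity of the coloring or the property that the endpoints are colored $a$; so I cannot route the ends out freely. The clean way around this is to work on $S^2$ and observe that cutting $A$ at $p$ and $B$ at $q$ merges the faces of $D$ adjacent to those arcs: if $A$ and $B$ border a common face of $D$ and we cut along that shared boundary, the two merges fuse into a single face of the severed diagram carrying all four stubs, and taking that face as the outer region presents $T$ as a tangle with no extra crossings. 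I would therefore phrase the cut so that the two like-colored arcs are severed along a shared complementary region, which is exactly the configuration realized in the figure, and this is the point where the argument needs genuine care rather than routine bookkeeping.
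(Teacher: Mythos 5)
Your cutting-and-monochromatic-extension argument reproduces the paper's proof in its first case, and it is correct as far as it goes: when the two like-colored arcs are adjacent to a common face, cutting each of them along that face yields a genuine $2$-tangle with all four endpoints colored $a$, the restricted coloring is non-trivial, and any knot diagram containing the tangle can be colored by extending monochromatically, hence is knotted. You also correctly identified the genuine geometric issue, namely that the four stubs must emerge into a single complementary region. But your resolution --- ``phrase the cut so that the two like-colored arcs are severed along a shared complementary region'' --- is not available to you: the hypothesis of the theorem only provides two distinct arcs of the same color somewhere in $D$, with no guarantee that they bound a common face, and you offer no mechanism for reducing the general case to the shared-face case. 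This is a real gap, and it is exactly what the second case of the paper's proof is for.

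The missing idea is that the reduction can be carried out with type II Reidemeister moves, and your reason for ruling this out is mistaken. You assert that dragging an end across the diagram forces new crossings and that ``a new crossing against any arc colored differently from $a$ would destroy either the validity of the coloring or the property that the endpoints are colored $a$.'' That is false if the moving arc is routed \emph{over} everything in its path: in a quandle (or Fox) coloring the over-strand at a crossing is never recolored, so an arc colored $a$ slid over the rest of the diagram by R2 moves stays a single arc colored $a$; it is the under-strands it passes over that get subdivided and recolored, and the recoloring accompanying Reidemeister moves produces a valid coloring that remains non-trivial (the paper cites \cite{pLopes} for this preservation). So the paper, before cutting, slides one of the two like-colored arcs over the diagram into the vicinity of the other, reducing to the shared-face situation, and then applies the argument you gave. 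With that step added --- slide over, recolor, note non-triviality is preserved, then cut --- your proof closes.
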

\begin{proof}
We distinguish two situations.
\begin{enumerate}
\item In the first one, the two arcs receiving the same color are both sides to the same face of the diagram, see Figure \ref{fig:persist-2-tangle} and Figure \ref{fig:persist-1-arc-bis} (the  two tangle diagrams). We then disconnect each of the arcs referred to, thereby obtaining a tangle. This tangle can be non-trivially colored, since it stems from a knot that can be non-trivially colored. Thus, if this tangle is found in a new knot diagram, the rest of this diagram can be monochromatically colored. The tangle obtained is thus a persistent tangle. This concludes the proof in this situation.

\item In the second situation, the two arcs receiving the same color are sides to different faces of the diagram. In this case we perform a finite number of type II Reidemeister moves in order to bring one of the arcs over to the vicinity of the other, see Figure  \ref{fig:persist-1-arc-bis} (the  two knot diagrams). Also note that the new coloring obtained by consistently recoloring after each of the type II Reidemeister moves is a non-trivial coloring, since recoloring after Reidemeister moves (colored Reidemeister moves) preserves non-triviality, \cite{pLopes}. Now there are two arcs bearing the same color and both are sides to the same face of the diagram. We can then apply the reasoning in $1.$ to conclude the proof in this situation.
\end{enumerate}
The proof is complete.
\end{proof}
\begin{figure}[!ht]
	\psfrag{a1}{\huge$a_1$}
	\psfrag{a2}{\huge$a_2$}
	\psfrag{A1}{\huge$A_1$}
	\psfrag{A2}{\huge$A_2$}
	\psfrag{A1'}{\huge$A'_1$}
	\psfrag{A2'}{\huge$A'_2$}
	\centerline{\scalebox{.4}{\includegraphics{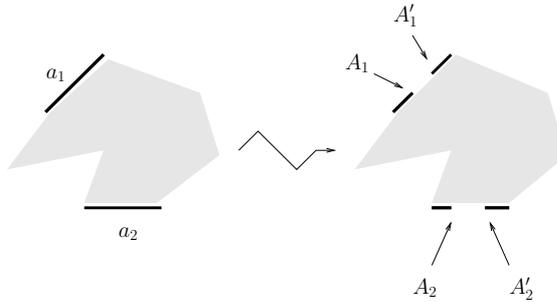}}}
	\caption{Left hand-side: the knot diagram equipped with a non-trivial coloring (not shown) with arcs $a_1$ and $a_2$ that receive the same color. Right hand-side: the $2$ tangle obtained by disconnecting arcs $a_1$ and $a_2$; $A_1, A'_1, A_2, A'_2$ are start- and/or end- points after disconnecting arcs $a_1$ and $a_2$. The shadowed regions do not contain any arc nor crossing of the diagram.}\label{fig:persist-2-tangle}
\end{figure}
\begin{figure}[!ht]
	\psfrag{0}{\huge$0$}
	\psfrag{1}{\huge$1$}
	\psfrag{2}{\huge$2$}
	\psfrag{...}{\huge$\cdots$}
	\centerline{\scalebox{.3}{\includegraphics{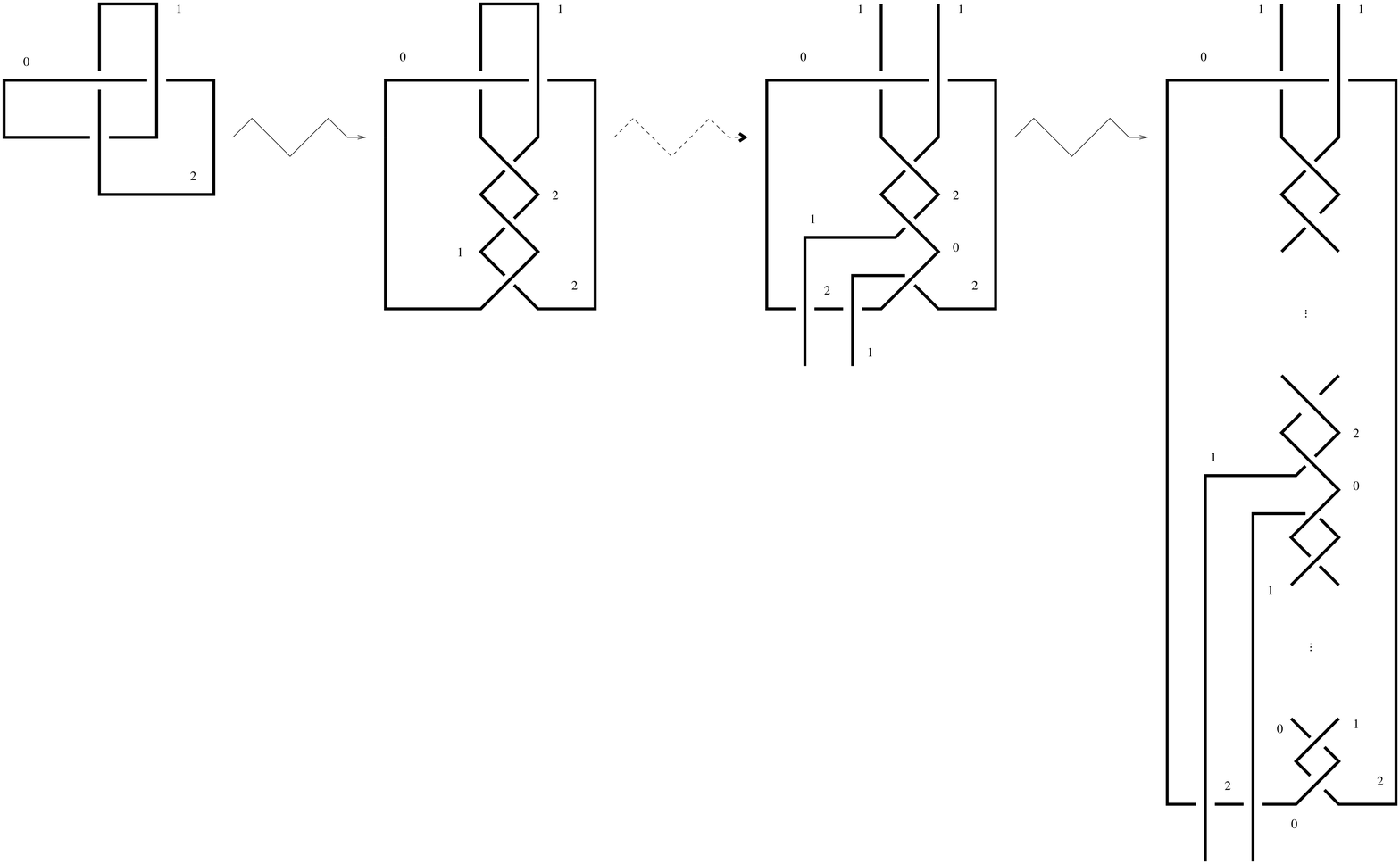}}}
	\caption{We enumerate the diagrams in this Figure $1$ though $4$, starting from the left-most. $1$: the diagram of a trefoil that evolves into ($2$) an equivalent diagram via a type II Reidemeister move. $3$. Another type II Reidemeister move and we obtained two arcs bearing the same color and adjacent to the same face. The appropriate disconnections are performed, thereby obtaining a tangle. $4$: (type II) Reidemeister moves are performed on the tangle (endpoints are kept fixed).}\label{fig:persist-1-arc-bis}
\end{figure}
We remark that in spite of the conditions of Theorem \ref{thm:1st-theorem} being satisfied, we may end up with unexpected outputs, like unlinked components, see Figure \ref{fig:8-16}.

\begin{figure}[!ht]
	\psfrag{8-16}{\huge$\mathbf{8_{16}}$}
	\psfrag{0}{\huge$0$}
	\psfrag{1}{\huge$1$}
	\psfrag{2}{\huge$2$}
	\psfrag{3}{\huge$3$}
	\psfrag{4}{\huge$4$}
	\psfrag{5}{\huge$\mathbf{5}$}
	\psfrag{d}{\huge$d$}
	\centerline{\scalebox{.4}{\includegraphics{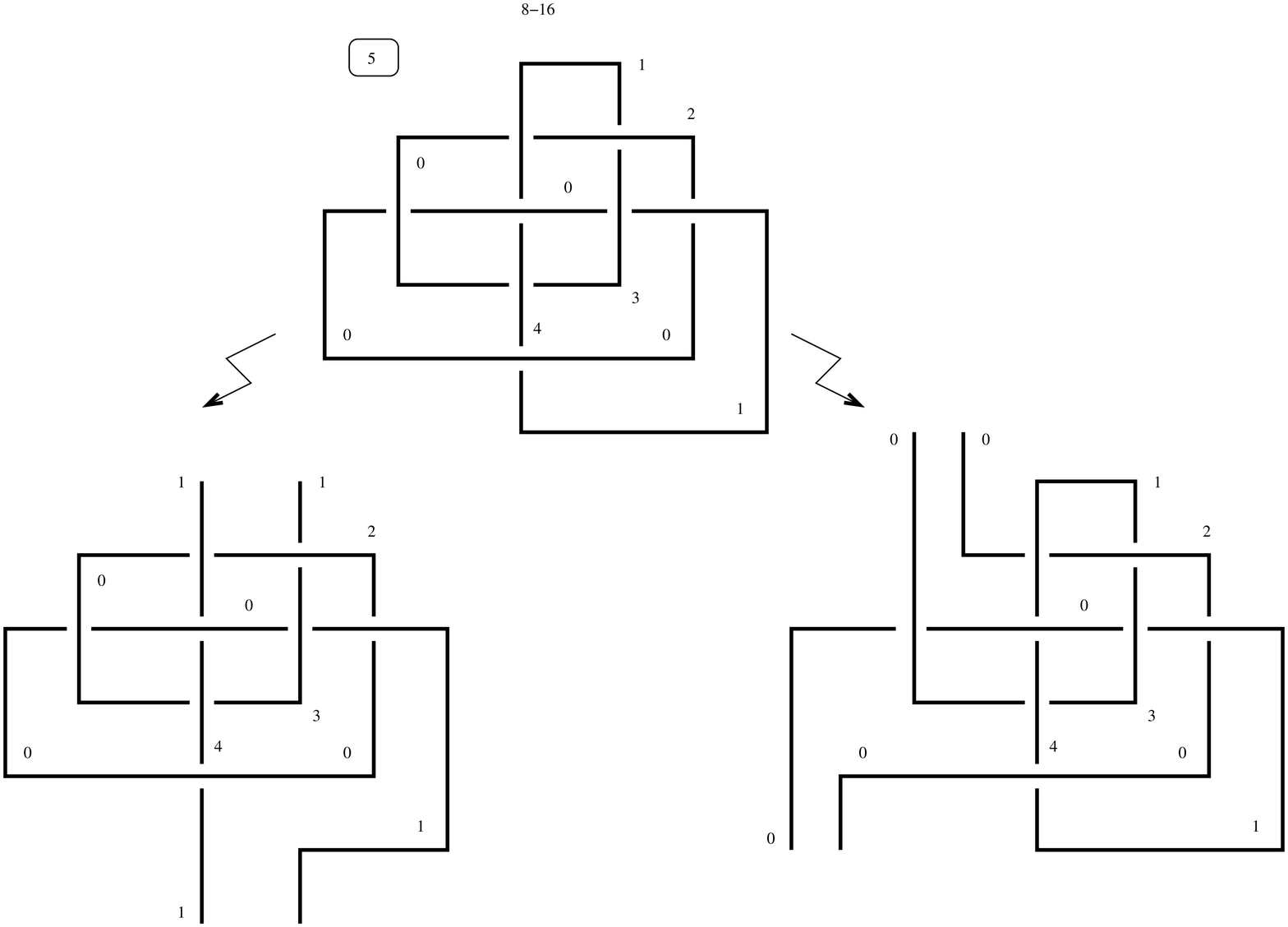}}}
	\caption{Top part of the Figure: knot $8_{16}$ endowed with a non-trivial $5$-coloring (flagged by the boxed $5$ on the top left). There are distinct arcs receiving the same colors. In the bottom part of the Figure we break pairs of arcs that receive the same color in order to produce persistent tangles, a la Theorem \ref{thm:1st-theorem}. Interesting feature with the persistent tangles. Bottom left: the arcs are (independently) unknotted; bottom right: one of the arcs is unknotted (per se), the other is not.}\label{fig:8-16}
\end{figure}

In Corollary \ref{cor:linking} we give a sufficient condition for the output to display linking among the components.

\begin{cor}\label{cor:linking}
Assume $K$ is a knot admitting a non-trivial coloring over a diagram such that two distinct arcs receive the same color. Then there is an equivalent diagram of $K$ with two arcs receiving the same color and such that cutting at one point each of the two arcs that have the same color yields a tangle with non-zero linking number.
\end{cor}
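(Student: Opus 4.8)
The plan is to start from the reduction already achieved in Theorem~\ref{thm:1st-theorem}: after a finite sequence of type~II Reidemeister moves we may assume that the two arcs $a_1,a_2$ carrying the common color $a$ are adjacent to one and the same face of the diagram, and that the associated coloring is still non-trivial (recoloring across colored Reidemeister moves preserves non-triviality, \cite{pLopes}). For a $2$-tangle with strands $\alpha$ and $\beta$ I will take as \emph{linking number} the quantity $\mathrm{lk}(\alpha,\beta)=\tfrac12\sum_{x}\varepsilon(x)$, the half-sum of the signs $\varepsilon(x)$ over those crossings $x$ at which $\alpha$ meets $\beta$. A routine check against the Reidemeister moves allowed inside the disc (keeping the four endpoints fixed) shows this is an invariant of the tangle, so it is legitimate to compute it from any convenient diagram.

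Next I would introduce extra structure without changing the knot type. Inside the common face I push a small finger of $a_1$ across $a_2$ by a single type~II Reidemeister move; this produces two new crossings $c_{+}$ and $c_{-}$ of opposite sign, both formed by the finger of $a_1$ passing over $a_2$. The whole affected region stays monochromatically colored by $a$: the finger is an over-arc and hence keeps the color $a$, while $a_2$ passes under an arc of its own color $a$ and, since $2a-a=a$, also retains $a$. In particular all four would-be endpoints still receive the color $a$, so whatever tangle we cut out remains persistent by the argument of Theorem~\ref{thm:1st-theorem}, and the coloring is still the original non-trivial one.

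The key step is to compare two choices of cut point on $a_1$ while keeping the cut point of $a_2$ fixed: a point $P^{\mathrm{out}}$ lying outside the finger (so that both $c_{+}$ and $c_{-}$ sit on the same side of the cut) and a point $P^{\mathrm{in}}$ lying on the finger between $c_{+}$ and $c_{-}$. Cutting at $P^{\mathrm{out}}$ places both new crossings on a single strand, so they are either both self-crossings of that strand or a canceling mixed pair; in either case their contribution to $\mathrm{lk}(\alpha,\beta)$ is $0$. Cutting at $P^{\mathrm{in}}$ instead separates $c_{+}$ and $c_{-}$ onto the two different strands, turning exactly one of them into an $\alpha$--$\beta$ crossing and the other into a self-crossing. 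Since the only crossings whose strand-membership changes between the two cuts are $c_{+}$ and $c_{-}$ themselves, the two resulting tangles have linking numbers differing by exactly $\pm\tfrac12$. Consequently they cannot both vanish, and choosing the cut that gives the non-zero value produces the desired diagram and tangle.

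The main obstacle is the sign and strand bookkeeping in this last step. One must verify, attending to orientations, that pushing the finger across $a_2$ really yields a pair of oppositely signed crossings, that the segment of $a_1$ transferred from one strand to the other as the cut moves from $P^{\mathrm{out}}$ to $P^{\mathrm{in}}$ carries no crossings other than $c_{\pm}$ (so that no distant crossing silently changes membership and cancels the gain), and that exactly one of $c_{\pm}$ becomes a mixed crossing for the inside cut. Granting this local analysis, the conclusion that the two linking numbers differ by $\tfrac12$, and hence that one of them is non-zero, is immediate.
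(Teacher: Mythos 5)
Your proposal is correct and is essentially the paper's own approach: the paper likewise produces the linking via type~II Reidemeister moves performed between the two same-colored arcs, which preserve the coloring precisely because both arcs carry the color $a$ (so $2a-a=a$), and then cuts so that the newly created crossing pairs do not cancel; the paper's proof, however, consists only of a pointer to Figure~\ref{fig:6-2}. Your comparison of the two cut points $P^{\mathrm{out}}$ and $P^{\mathrm{in}}$, showing that exactly one of the two new crossings changes from a self-crossing to a mixed crossing so that the two linking numbers differ by $\pm\tfrac12$ and hence cannot both vanish, is a rigorous formulation of exactly the mechanism the paper illustrates pictorially.
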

\begin{proof}
Look at Figure \ref{fig:6-2}. The right number of Type II Reidemeister moves increases the linking number while preserving the desired color on the arcs to be disconnected.
\end{proof}

\begin{figure}[!ht]
	\psfrag{0}{\huge$0$}
	\psfrag{1}{\huge$1$}
	\psfrag{2}{\huge$2$}
	\psfrag{3}{\huge$3$}
	\psfrag{4}{\huge$4$}
	\psfrag{5}{\huge$5$}
	\psfrag{6}{\huge$6$}
	\psfrag{7}{\huge$7$}
	\psfrag{8}{\huge$8$}
	\psfrag{9}{\huge$9$}
	\psfrag{10}{\huge$10$}
	\psfrag{6k-9(k-1)}{\huge$6k-9(k-1)$}
	\psfrag{6(k+1)-9k}{\huge$6(k+1)-9k$}
	\psfrag{11}{\huge$\mathbf{11}$}
	\psfrag{6-2}{\huge$6_2$}
	\psfrag{k}{\huge$k$}
	\psfrag{...}{\huge$\cdots$}
	\centerline{\scalebox{.4}{\includegraphics{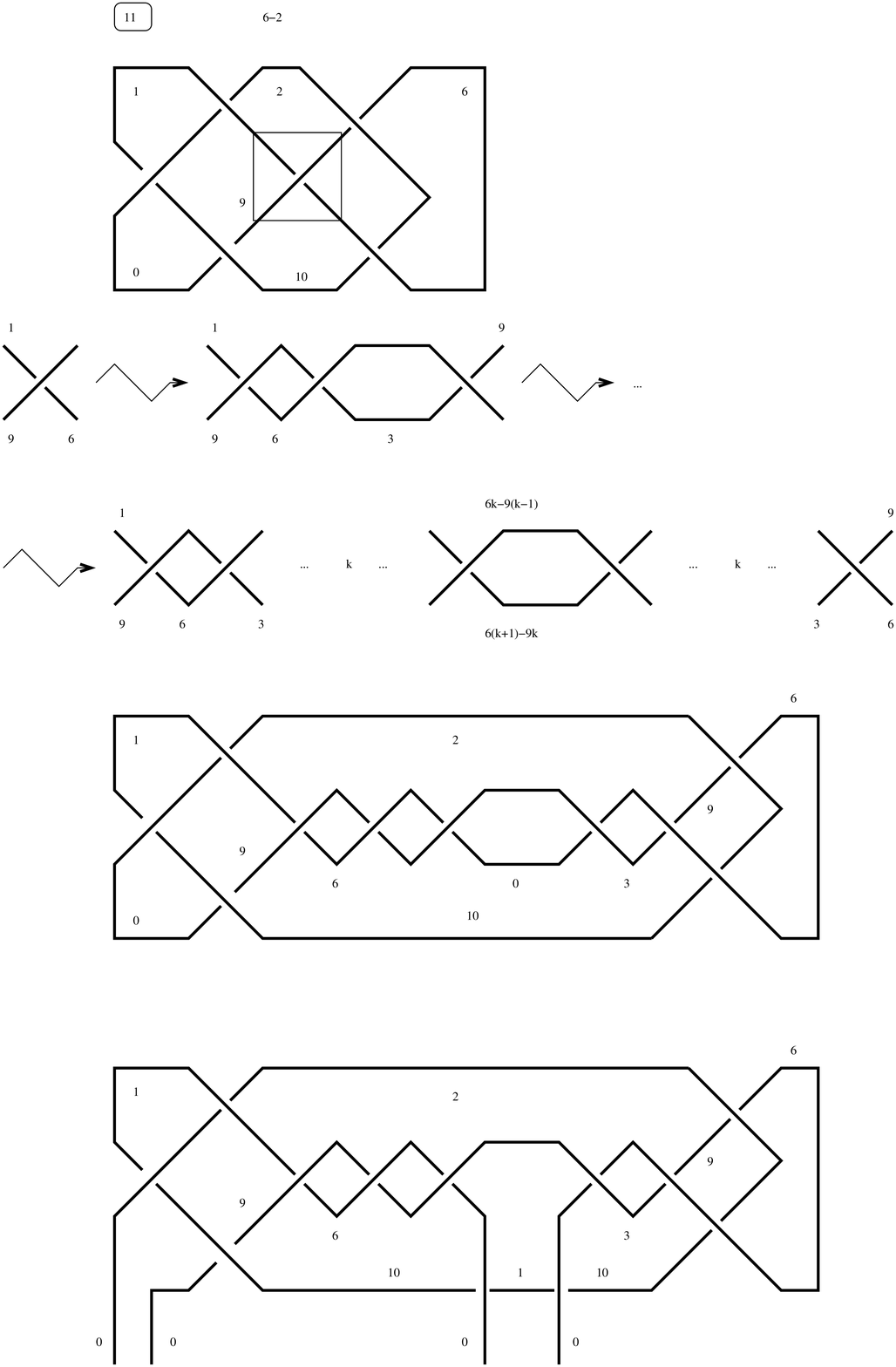}}}
	\caption{Top: knot $6_2$ non-trivially colored mod $11$; the boxed crossing is worked out below via type II Reidemeister moves and later inserted into the diagram. In the bottom we have a $2$-tangle whose arcs are knotted.}\label{fig:6-2}
\end{figure}

Here is (Corollary \ref{cor:rat-persist-tangle}) another systematic way of producing persistent tangles in the spirit of Corollary \ref{thm:persistent-2-tangles}.

\begin{cor}\label{cor:rat-persist-tangle}
Given any rational tangle, $T$, tangle addition of it to $T^{\star}$, its mirror image, produces a persistent tangle.
\end{cor}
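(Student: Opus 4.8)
The plan is to produce, for the tangle $S := T + T^{\star}$, a non-trivial coloring that assigns one and the same color to all four of its endpoints; once this is in hand, persistence of $S$ follows by exactly the monochromatic-extension argument used in Corollary \ref{thm:persistent-2-tangles} and Theorem \ref{thm:1st-theorem}. Indeed, any knot diagram containing $S$ may have all of its arcs outside $S$ colored by that single boundary color, producing a non-trivial coloring of the whole diagram and hence a non-trivial knot.

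To build this coloring I would first record the colorings of a rational tangle through its two canonical closures. Writing $F(T)=p/q$ in lowest terms, the numerator closure $N(T)$ is a two-bridge link with $\det N(T)=|p|$ and the denominator closure $D(T)$ is a two-bridge link with $\det D(T)=|q|$. A non-trivial $\mathbf{Z}/r\mathbf{Z}$-coloring of a closure restricts to a coloring of the underlying tangle in which the two endpoints joined by each external capping arc share a color; thus a non-trivial coloring of $D(T)$ (which exists whenever a prime $r$ divides $q$) colors $T$ so that its left pair of endpoints carries a single color and its right pair carries a single color. The engine of the proof is the behaviour of these invariants under the two operations in play: passing to the mirror image replaces $F(T)$ by $-F(T)$, while the Kauffman-bracket decomposition of a tangle in the basis given by the two elementary ($0$- and $\infty$-) tangles is bilinear under tangle addition, so that, evaluating at the determinant specialization where the loop value vanishes, one obtains
\begin{equation*}
\det N(T+T^{\star}) = |\,q\cdot(-p)+q\cdot p\,| = 0,
\qquad
\det D(T+T^{\star}) = |q\cdot q| = q^{2}.
\end{equation*}
Hence $\gcd\bigl(\det N(S),\det D(S)\bigr)=q^{2}$.

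With this computation the rest falls into place. For any prime $r$ dividing $q$ the denominator closure $D(S)$ supports non-trivial $\mathbf{Z}/r\mathbf{Z}$-colorings, and the vanishing of $\det N(S)$ is exactly what lets one arrange such a coloring so that the top and bottom boundary colors agree as well. Concretely, one colors $T$ by a non-trivial denominator coloring whose left/right colors are $(a,b)$ and colors $T^{\star}$ by the matching denominator coloring whose left/right colors are $(b,a)$, so that the colors agree across the two junction arcs of the sum while all four outer endpoints receive the common color $a$. This is the promised monochromatic-boundary non-trivial coloring of $S$, and persistence follows as above. Equivalently, and more invariantly, $\gcd(\det N(S),\det D(S))=q^{2}>1$ is precisely Krebes's obstruction to $S$ being a subtangle of the unknot \cite{Krebes}.

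The hard part, and the place where care is genuinely needed, is the junction-matching step: reversing all crossings to pass from $T$ to $T^{\star}$ changes which strand is broken at each crossing, so a Fox coloring of $T$ does not transport arc-by-arc to one of $T^{\star}$, and one must instead verify directly that $T^{\star}$ realizes the swapped boundary pair $(b,a)$. The reflection symmetry of the diagram $T+T^{\star}$ across its central vertical axis, which interchanges the two summands precisely because $T^{\star}$ is the mirror of $T$, is what ultimately supplies this matching. A second point to flag is the degenerate case $|q|=1$: for an integer tangle (fraction $n\in\mathbf{Z}$) one has that $T+T^{\star}$ reduces, by repeated type II moves, to the $0$-tangle, which is not persistent; so the statement should be read for rational tangles of non-integral fraction (equivalently $\det D(T)=|q|>1$), and the computation above shows this is exactly the condition under which the construction produces a persistent tangle.
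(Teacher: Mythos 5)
Your proposal is correct, and it supplies considerably more than the paper does: the paper's entire proof of Corollary \ref{cor:rat-persist-tangle} is the sentence ``See Figure \ref{fig:T-Tstar} for an illustration,'' i.e., one worked example (a mod-$7$ coloring of a particular $T+T^{\star}$). Your argument is the same in spirit --- produce a non-trivial coloring of $S=T+T^{\star}$ assigning a single color to all four endpoints, then extend monochromatically as in Theorem \ref{thm:1st-theorem} --- but you make it general and rigorous: $D(T)$ is a two-bridge link of determinant $|q|$, any prime $r$ dividing $q$ gives a non-trivial Fox $r$-coloring whose restriction to $T$ is constant on each pair of endpoints joined by a closure arc, and realizing $T^{\star}$ as the left-right flip of the diagram of $T$ (legitimate, since both the flipped and the crossing-switched diagrams are rational with fraction $-p/q$, hence present the same tangle rel boundary by Conway's classification) transports that coloring to $T^{\star}$ with the boundary pair swapped, so the junction colors match. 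This flip is indeed the honest resolution of the arc-transport difficulty you flag, since Fox colorings do not pass arc-by-arc through crossing switches. Your second route, via the determinant specialization of the bracket giving $\det N(S)=0$, $\det D(S)=q^{2}$, and Krebes's divisibility theorem applied to $\gcd(0,q^{2})=q^{2}>1$, is an independent complete proof that the paper does not contain.

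Most importantly, your closing caveat is a genuine correction rather than a quibble: for an integer tangle (fraction $n$, so $|q|=1$), $T+T^{\star}$ collapses by type II moves to the $0$-tangle, which embeds in the unknot, so the corollary as literally stated (``any rational tangle'') is false. It holds exactly when $\det D(T)=|q|>1$, and your determinant computation locates that hypothesis precisely --- something the paper's proof-by-picture cannot detect.
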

\begin{proof}
See Figure \ref{fig:T-Tstar} for an illustration.
\begin{figure}[!ht]
	\psfrag{0}{\huge$0$}
	\psfrag{1}{\huge$1$}
	\psfrag{2}{\huge$2$}
	\psfrag{3}{\huge$3$}
	\psfrag{4}{\huge$4$}
	\psfrag{-6}{\huge$-6$}
	\psfrag{-3}{\huge$-3$}
	\psfrag{=}{\huge$=$}
	\psfrag{+}{\huge$+$}
	\psfrag{T+}{\Huge$T^{\star}$}
	\psfrag{T+T+}{\Huge$T+T^{\star}$}
	\psfrag{T}{\Huge$T$}
	\psfrag{-6=1}{\huge$-6=_{7} 1$}
	\psfrag{p-1}{\huge$p-1$}
	\centerline{\scalebox{.4}{\includegraphics{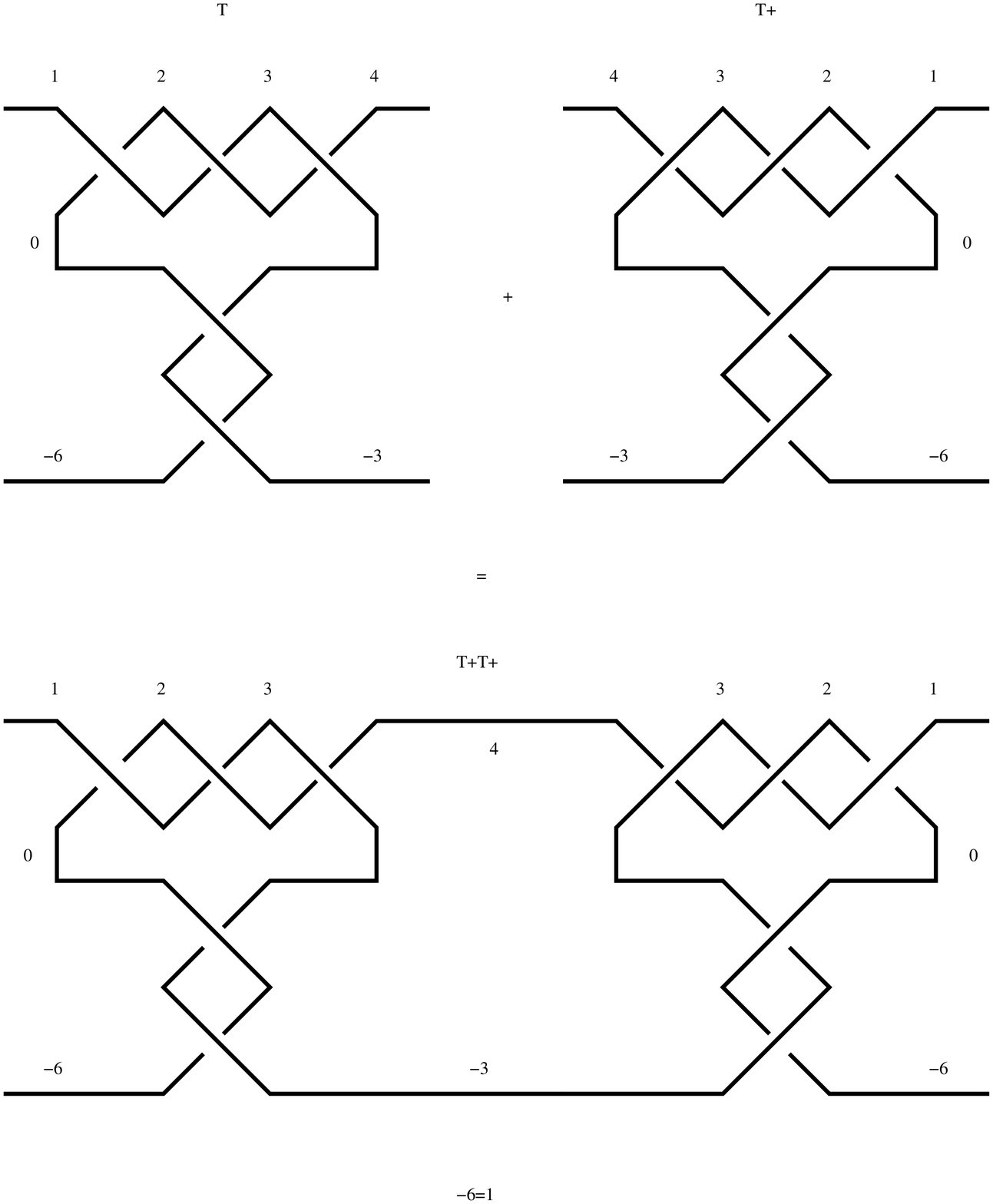}}}
	\caption{A persistent tangle. The non-trivial coloring at issue is by the dihedral quandle of order $7$.}\label{fig:T-Tstar}
\end{figure}
\end{proof}

\begin{definition}\label{def:rationallyirred}
A tangle is irreducible if no ambient isotopy plus adjacent end twisting can make it into a tangle with fewer crossings, and it is neither an infinity tangle nor a zero tangle.

Note that rational knots and links are by definition closures of rational tangles. Rational tangles are those tangles that are rationally reducible. But non-rational tangles can sometimes reduce to rational knots (we have specific examples in the paper).
\end{definition}

\begin{definition}\label{def:irred}
A tangle $T$ is said to be {\bf irreducible} if it is rationally irreducible, without local knots, and if whenever the numerator $N(T)$ or denominator $D(T)$ closure has one component, then it is a non-trivial knot.
\end{definition}

\begin{conj}
An irreducible tangle is a persistent tangle.
\end{conj}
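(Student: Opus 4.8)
The plan is to prove the contrapositive in a form adapted to persistence. Recall that a tangle $T$ fails to be persistent precisely when it can be completed to an unknot: there is a complementary $2$-tangle $U$, occupying the ball outside the tangle ball $B$ of $T$, such that the glued-up diagram $T\cup U$ is a single unknotted circle. Thus it suffices to show that an irreducible tangle admits no such unknotting completion. I would phrase the whole argument around the double branched cover $\Sigma(T)$ of the tangle ball $B$ branched over the two arcs of $T$; this is a compact $3$-manifold whose boundary is the torus $T^2$ that double-covers $(\partial B,\, 4\text{ points})$. The first two conditions in Definition \ref{def:irred} are exactly what one needs to control $\Sigma(T)$: rational irreducibility is to be matched with the classical fact that $T$ is rational if and only if $\Sigma(T)$ is a solid torus, and the absence of local knots should guarantee that $\Sigma(T)$ is irreducible and carries no spurious reducing spheres.

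Granting an unknotting completion $T\cup U=O$, I would pass to double branched covers. Since the double cover of $(S^3,O)$ over the unknot is again $S^3$, one obtains a decomposition $S^3=\Sigma(T)\cup_{T^2}\Sigma(U)$ of $S^3$ along the common boundary torus. Every torus embedded in $S^3$ is compressible and bounds a solid torus on at least one side. A compressing disk on the $\Sigma(T)$ side would make $\Sigma(T)$ a solid torus (using that it is irreducible), contradicting rational irreducibility; hence the compression takes place on the $\Sigma(U)$ side. Using that $O$ is prime, so that $U$ can inherit no local knots, $\Sigma(U)$ is irreducible and the compression forces it to be a genuine solid torus, i.e.\ the complementary tangle $U$ is rational. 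This already narrows the problem enormously: an irreducible tangle can only ever be unknotted by a \emph{rational} completion.

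It then remains to rule out unknotting $T$ by a rational tangle. A rational completion is, up to isotopy of the closure, the operation of adding integer and vertical twists to the four endpoints of $T$ followed by a numerator or denominator closure; reading this back through the continued-fraction structure of rational tangles should reduce $T\cup U=O$ to the statement that one of the closures $N(T)$ or $D(T)$, after a controlled amount of end twisting, is a one-component unknot. This is exactly the eventuality forbidden by the last clause of Definition \ref{def:irred}. An alternative, more in the spirit of the colouring methods used throughout this paper, is to manufacture directly from the non-triviality of $\Sigma(T)$ a non-trivial quandle colouring of $T$ whose four endpoints all receive the same colour: such a peripherally monochromatic colouring extends (monochromatically) across any $U$ and forces $T\cup U$ to be non-trivially coloured, hence knotted. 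The homological content of $\Sigma(T)$ failing to be a solid torus, which governs the $p$-colourings of $T$, is what one would exploit to produce this colouring.

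The main obstacle, and the reason the statement is posed as a conjecture, is the passage from ``$U$ is rational'' to a genuine contradiction with the $N(T)$/$D(T)$ clause. The end twisting introduced by a general rational completion need not leave the numerator and denominator closures of $T$ unchanged, so the hypothesis as stated — phrased only for $N(T)$ and $D(T)$ themselves — must somehow be propagated across all rational completions; controlling this requires understanding how the first homology (and the colouring data) of $\Sigma(T)$ transforms under Dehn filling, which is precisely the $3$-manifold input lying beyond the elementary colouring arguments of the present article. A secondary difficulty is the careful verification that the absence of local knots really does force $\Sigma(T)$ and $\Sigma(U)$ to be irreducible, so that the compressibility argument yields an honest solid torus rather than a solid torus obscured by extraneous connect-summands.
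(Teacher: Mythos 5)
There is no proof in the paper to compare your attempt against: the statement is posed as an open \emph{conjecture}, and the authors say explicitly that its solution ``appears to require techniques beyond the reach of the present paper.'' Your proposal is likewise not a proof, and you say so yourself; so the honest verdict is that a genuine gap remains, and it sits exactly where you locate it. What you do have is a sound-looking reduction: if $T\cup U$ is an unknot, passing to double branched covers gives $S^3=\Sigma(T)\cup_{T^2}\Sigma(U)$, Alexander's theorem says the torus bounds a solid torus on one side, the classical characterization of rational tangles (rational $\Leftrightarrow$ solid-torus double branched cover) plus rational irreducibility of $T$ pushes the solid torus to the $\Sigma(U)$ side, so $U$ must be rational. (Two details you wave at but would need to nail down: identifying the paper's diagrammatic Definition \ref{def:rationallyirred} with ``non-rational'' in the sense of the double-cover theorem, and invoking the equivariant sphere theorem to get irreducibility of $\Sigma(T)$ from the absence of local knots.) This reduction is real progress of a kind the paper does not attempt, but it is not the conjecture.

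The unfilled step is fatal to the proposal as a proof: Definition \ref{def:irred} constrains only the two specific closures $N(T)$ and $D(T)$, while rational completions of $T$ form an infinite family, essentially parametrized by $\mathbf{Q}\cup\{\infty\}$, and nothing in the hypothesis prevents some \emph{other} rational filling of $T$ from producing the unknot. Propagating the $N(T)$/$D(T)$ hypothesis across all rational fillings is precisely the Dehn-filling problem you defer, and without it the argument proves only the weaker statement ``an irreducible tangle cannot be unknotted by a non-rational completion.'' Your fallback suggestion --- manufacturing a non-trivial, peripherally monochromatic coloring of $T$ from the fact that $\Sigma(T)$ is not a solid torus --- is refuted by the paper itself: Figure \ref{fig:non-rat-tangle} exhibits an irreducible tangle (the one whose closures are the figure-eight knot and the trefoil, Figure \ref{fig:irred-2-tangle}) admitting \emph{no} non-trivial coloring that labels all four ends with the same color, so no coloring argument of that shape can close the gap. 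In short: your write-up is a reasonable research program, and its first half could stand as a lemma, but the conjecture remains open after it.
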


A small example of an irreducible tangle whose persistent we are not yet able to prove, is given in Figure \ref{fig:irred-2-tangle}. Many of the persistent tangles already discussed in this paper are irreducible. However, a proof of this conjecture would definitely require techniques beyond the coloring approach of the present paper. For example, the tangle in Figure \ref{fig:non-rat-tangle} is shown, in that figure, to admit no non-trivial coloring that constantly labels all of its tangle ends.

\begin{figure}[!ht]
	\psfrag{Fig-8}{\huge$\text{Figure-$8$ knot, $N(T)$}$}
	\psfrag{T}{\huge$T$}
	\psfrag{DT}{\huge$D(T), \text{ Trefoil knot}$}
	\psfrag{3}{\huge$3$}
	\psfrag{4}{\huge$4$}
	\psfrag{5}{\huge$5$}
	\psfrag{6}{\huge$6$}
	\psfrag{7}{\huge$7$}
	\psfrag{8}{\huge$8$}
	\psfrag{9}{\huge$9$}
	\psfrag{10}{\huge$10$}
	\psfrag{6k-9(k-1)}{\huge$6k-9(k-1)$}
	\psfrag{6(k+1)-9k}{\huge$6(k+1)-9k$}
	\psfrag{11}{\huge$\mathbf{11}$}
	\psfrag{6-2}{\huge$6_2$}
	\psfrag{k}{\huge$k$}
	\psfrag{...}{\huge$\cdots$}
	\centerline{\scalebox{.4}{\includegraphics{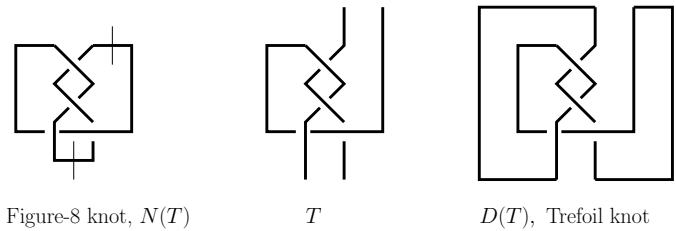}}}
	\caption{Left: Figure-$8$ knot; numerator closure of tangle $T$ (notation, $N(T)$) in the middle. Middle: tangle $T$. Right: denominator closure of $T$, $D(T)$ (it is the trefoil).  Remark: both $N(T)$ and $D(T)$ are non-trivial knots. $T$ is thus an example of an irreducible tangle. However, if we color the slashed arcs with the same color then this assignment will necessarily extend to a trivial coloring. Therefore, coloring arguments are not enough to prove this tangle is persistent - should it be persistent.}\label{fig:irred-2-tangle}
\end{figure}

\begin{figure}[!ht]
	\psfrag{Fig-8}{\huge$\text{Figure-$8$ knot, $N(T)$}$}
	\psfrag{T}{\huge$T$}
	\psfrag{DT}{\huge$D(T), \text{ Trefoil knot}$}
	\psfrag{2}{\huge$2$}
	\psfrag{0}{\huge$0$}
	\psfrag{1}{\huge$1$}
	\psfrag{6}{\huge$6$}
	\psfrag{7}{\huge$7$}
	\psfrag{8}{\huge$8$}
	\psfrag{9}{\huge$9$}
	\psfrag{10}{\huge$10$}
	\psfrag{6k-9(k-1)}{\huge$6k-9(k-1)$}
	\psfrag{6(k+1)-9k}{\huge$6(k+1)-9k$}
	\psfrag{11}{\huge$\mathbf{11}$}
	\psfrag{6-2}{\huge$6_2$}
	\psfrag{k}{\huge$k$}
	\psfrag{...}{\huge$\cdots$}
	\centerline{\scalebox{.4}{\includegraphics{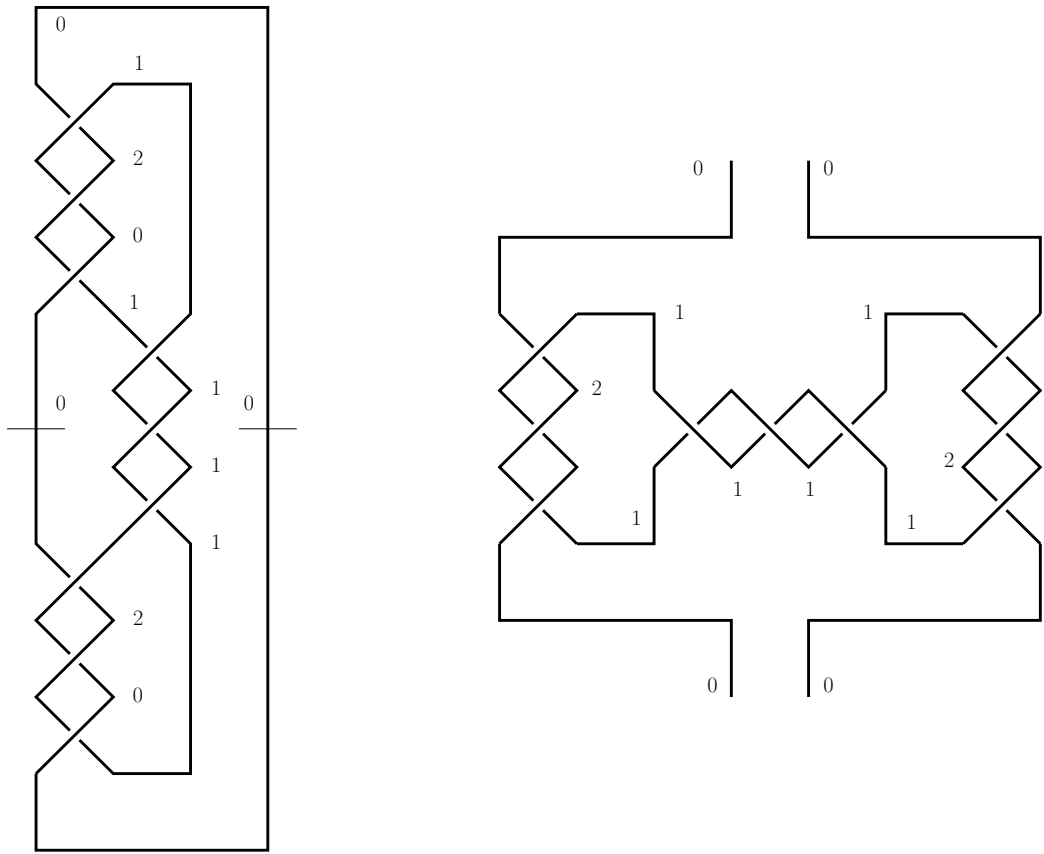}}}
	\caption{Cutting open a rational knot to give a non-rational tangle that is persistent and reduces to the Hercules tangle.}\label{fig:irred-persistent}
\end{figure}

\begin{figure}[!ht]
	\psfrag{a}{\huge$a$}
	\psfrag{b}{\huge$b$}
	\psfrag{c=b*a}{\huge$c=b\ast a$}
	\psfrag{e=b*a}{\huge$e=b\ast a$}
	\psfrag{c=bn*a}{\huge$c=b\bar{\ast} a$}
	\psfrag{e=bn*a}{\huge$e=b\bar{\ast} a$}
	\psfrag{d}{\huge$d$}
	\centerline{\scalebox{.4}{\includegraphics{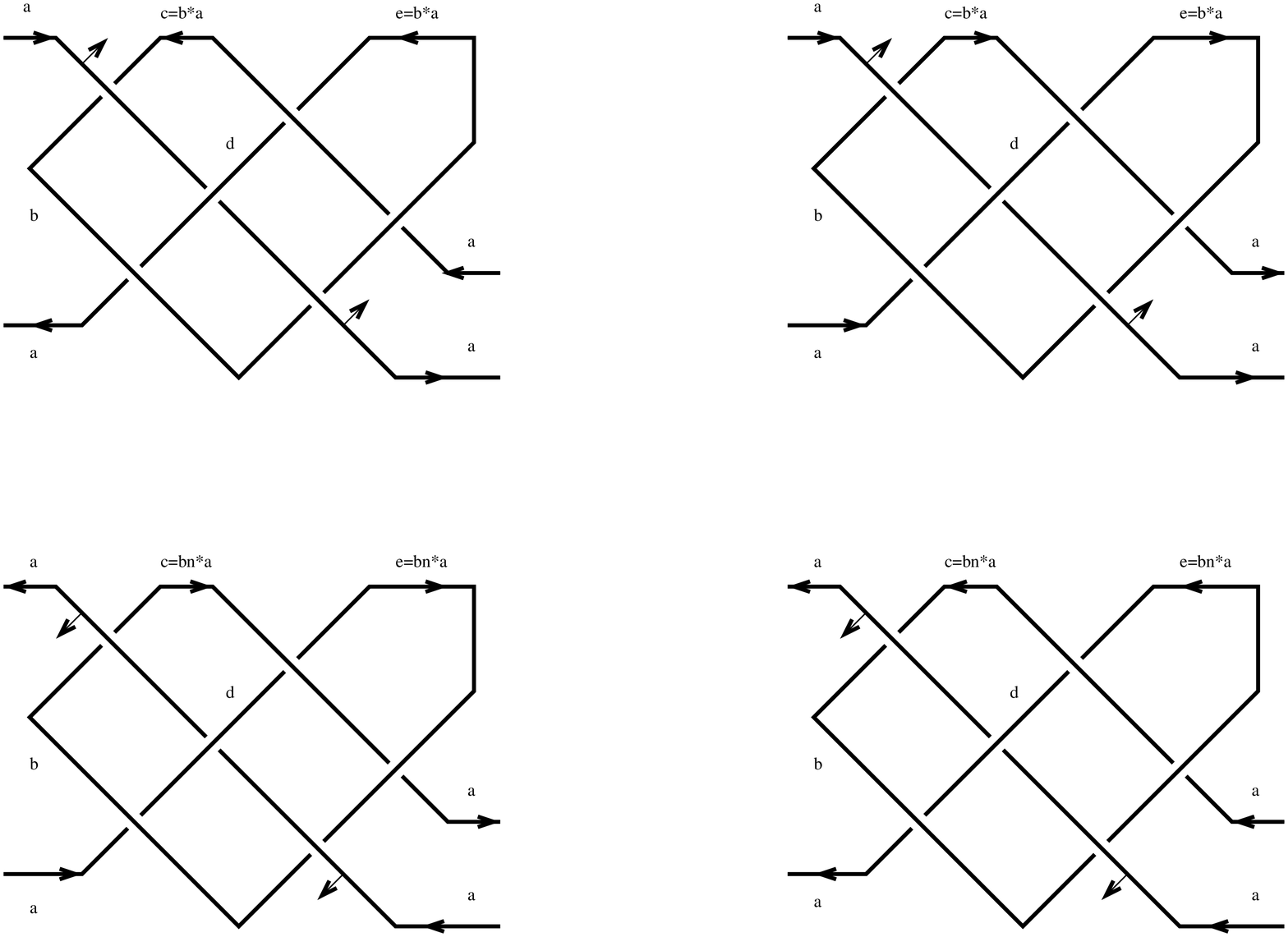}}}
	\caption{The same $2$-tangle with the four different assignments of orientations to its arcs; the configurations on the left-hand side are prepared for denominator closure whereas the ones on the right-hand side are prepared for numerator closure, for instance. We try to equip them with a non-trivial coloring keeping the start- and end-points with the same color, $a$. We arrive at an inconsistency: $e=c$ in each instance which implies the colorings have to be trivial, again in each instance.}\label{fig:non-rat-tangle}
\end{figure}

In Figure \ref{fig:hercules-over} yet another instance. It is a non-trivial knot since it features Krebes original tangle although in a more elaborate way.

\begin{figure}[!ht]
	\centerline{\scalebox{.4}{\includegraphics{}}}
	\caption{A non-trivial knot since the diagram features Krebes original tangle. A subtlety: Krebes original tangle lies over other portions of the diagram.}\label{fig:hercules-over}
\end{figure}

\end{document}